\newtheorem{theorem}{Theorem}[section]
\newtheorem{corollary}[theorem]{Corollary}
\newtheorem{proposition}[theorem]{Proposition}
\theoremstyle{definition}
\newtheorem{definition}[theorem]{Definition}
\newtheorem{example}[theorem]{Example}
\newtheorem{remark}[theorem]{Remark}
\theoremstyle{remark}
\begin{document}
\title{Topo-Groups and a Tychonoff Type Theorem}
\author{\sc M. Shahryari}
\thanks{}
\address{ Department of Pure Mathematics,  Faculty of Mathematical
Sciences, University of Tabriz, Tabriz, Iran }

\email{mshahryari@tabrizu.ac.ir}
\date{\today}

\begin{abstract}
We define a topo-system on a group $G$ as a set of subgroups satisfying certain topology-like conditions.
We study some fundamental properties of groups equipped with a topo-system (topo-groups) and using a suitable
formalization of a subgroup filter, we prove a Tychonoff type theorem for the direct product of topo-compact topo-groups.
\end{abstract}

\maketitle

{\bf AMS Subject Classification} Primary 20B07 and 20F38, Secondary 54A05.

{\bf Key Words} Topo-group; topo-system; filters of subgroups; subgroup ultrafilter.

\vspace{2cm}

\section{introduction}
In this article, we introduce an interesting topology-like concept
concerning groups (and with  almost the same method it can be
defined for other algebraic systems). Given an arbitrary group $G$,
we define a {\em topo-system} on $G$ as a set of subgroups
satisfying certain conditions like a topology on a set. We will call
such a group, a {\em topo-group}. These topo-groups are not rare and
as we will see, there are many examples of topo-groups. We
investigate fundamental notions concerning topo-groups and we see
that many basic concepts of topology can be introduced in the frame
of topo-groups. A {\em filter of subgroups} will be defined in such
way that we will be able to formulate a Tychonoff type theorem for
the direct product of {\em topo-compact} topo-groups. Through this
article, $Sub(G)$ and $Sub^{\ast}(G)$ will be used for the set of
subgroups and the set of non-identity subgroups of $G$,
respectively.

\begin{definition}
Let $G$ be a group and $T\subseteq Sub(G)$ be a set satisfying:

a) $1$ and $G\in T$.

b) if $\{ A_i\}_{i\in I}$ is a family of elements of $T$, then $\langle A_i: i\in I\rangle \in T$.

c) if $A$ and $B$ are elements of $T$, then $A\cap B\in T$.

Then we call $T$ a topo-system on $G$ and the pair $(G,T)$ a topo-group.
\end{definition}

Elements of $T$ will be called {\em $T$-open} (or {\em topen} for short) subgroups.

\begin{example} It is quite easy to find many examples of topo-groups. Here we give a sample list of such examples:

1- Let $T=Sub(G)$. This is the {\em discrete} topo-system on $G$.

2- Let $T=\{ 1, G\}$, the trivial topo-system.

3- Let $B\subseteq G$ and $T_B=\{ A\leq G: B\subseteq A\}\cup \{1\}$. This is the {\em principal} topo-system associated to $B$.

4- Let $T_{cf}=\{ A\leq G: [G:A]<\infty\}\cup\{1\}$. This is the {\em cofinite} topo-system on $G$.

5- Let $T_n=\{ A\leq G: A\unlhd G\}$. This is the {\em normal} topo-system on $G$.


6- Let $T_{char}=\{ A\leq G: A=characteristic\}$. This is the {\em characteristic} topo-system on $G$.

7- Let $G$ be a topological group and $T$ be the set of all open
subgroups of $G$ together with the identity subgroup. Then $T$ is a
topo-system on $G$.

8- Let $\mathfrak{X}$ be a variety of groups and $T_{\mathfrak{X}}=\{ A\unlhd G: G/A\in \mathfrak{X}\}\cup\{ 1\}$.
Then $T_{\mathfrak{X}}$ is a topo-system on $G$ and in fact we have $T_{\mathfrak{X}}=T_H\cap T_n$, where $H=O^{\mathfrak{X}}(G)$,
is the $\mathfrak{X}$-residual of $G$.
\end{example}

\begin{definition}
A subgroup $A\leq G$ is {\em $T$-closed}, if for all $x\in G\setminus A$, there exists $B\in T$ such that $x\in B$ and $A\cap B=1$.
\end{definition}

It is easy to verify that $1$ and $G$ are $T$-closed, and the intersection of any family of $T$-closed subgroups is $T$-closed.
Let $X\leq G$ and $x\in X$. Suppose there is a topen $A$ with $x\in A\leq X$. Then we say that $x$ is an {\em interior} element of $X$.
The set of all interior elements of $X$ is denoted by $X^{\circ}$ and it is called the interior of $X$. Clearly the interior of $X$ is a
topen subgroup. As an example, in the case of the normal topo-system (see the above example), we have $X^{\circ}=X_G$, the core of $X$.
In the case of the cofinite topo-system, it can be verified that $X^{\circ}=1$ or $X$. For any subgroup $X$, the {\em boundary} is
$\partial X=X\setminus X^{\circ}$. A point $x\in G$ is a limit point of $X$, if for all topen $A$ containing $x$, we have $|A\cap X|\geq 2$.
The $T$-{\em closure} of $X$ is $\overline{X}$ which is the subgroup generated by $X$ and its limit points. Clearly $\overline{X}$ is $T$-closed.
Despite, ordinary topology, there is no symmetry between the concepts of topen and $T$-closed subgroups. For example, if any subgroup of a
topo-group $G$ is topen, we can not say that any subgroup is $T$-closed as well. For example, suppose the group $G=\mathbb{Z}\times \mathbb{Z}_4$
with the discrete topo-system. Then the subgroup $A=\langle (0,2)\rangle$ is not $T$-closed. In general, we have the next proposition.

\begin{proposition}
Let all cyclic subgroups of $G$ be $T$-closed. Then all non-identity elements of $G$ have prime order.
\end{proposition}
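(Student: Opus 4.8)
The plan is to argue by contradiction. Suppose some element $x\in G$ with $x\neq 1$ has order that is not prime, i.e.\ $x$ has either infinite order or finite composite order. In either case the cyclic group $\langle x\rangle$ possesses a proper nontrivial subgroup $C$: one may take $C=\langle x^{2}\rangle$ when $x$ has infinite order, and $C=\langle x^{p}\rangle$ for a prime divisor $p$ of the (finite, composite) order of $x$ otherwise. The two features of $C$ that will be used are, first, that $C$ is a proper subgroup of $\langle x\rangle$, whence $x\notin C$ (the generator of a cyclic group lies in no proper subgroup of it); and second, that $C\neq 1$, the element $x^{d}$ generating $C$ (with $d=2$ or $d=p$ as appropriate) being nontrivial — this is exactly the point where ``order not prime'' is used, since for $x$ of prime order the only nontrivial subgroup of $\langle x\rangle$ is $\langle x\rangle$ itself.

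Next I would invoke the hypothesis that every cyclic subgroup of $G$ is $T$-closed, applied to $C$. Since $x\in G\setminus C$, $T$-closedness supplies a topen subgroup $B\in T$ with $x\in B$ and $C\cap B=1$. But $B$ is a subgroup containing $x$, so $x^{d}\in B$; as $x^{d}\in C$ too, we obtain $x^{d}\in C\cap B$ with $x^{d}\neq 1$, contradicting $C\cap B=1$. Hence no such $x$ exists, so every non-identity element of $G$ has prime order. (This is precisely the mechanism behind the $\mathbb{Z}\times\mathbb{Z}_{4}$ example above: any topen subgroup containing $(0,1)$ must contain $(0,2)\in\langle(0,2)\rangle$.)

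The argument is short; the only point requiring a little care is the bookkeeping separating the infinite-order case from the composite finite-order case and verifying in each that the chosen $C$ is genuinely proper and nontrivial with nontrivial generator $x^{d}$ — this is where I would be fully explicit. In fact the whole proof can be phrased uniformly: whenever $\langle x\rangle$ is neither trivial nor of prime order, it contains some proper nontrivial cyclic subgroup $\langle x^{d}\rangle$, and the contradiction above goes through verbatim for that $d$. I do not anticipate any genuine obstacle beyond this elementary case analysis.
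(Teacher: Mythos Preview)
Your proof is correct and uses the same mechanism as the paper's: apply $T$-closedness of a proper nontrivial cyclic subgroup $\langle x^{d}\rangle$ to obtain a topen $B\ni x$ with $\langle x^{d}\rangle\cap B=1$, then observe $x^{d}\in B$ forces $x^{d}=1$. The paper organizes the case analysis slightly differently (first using $\langle x^{2}\rangle$ to force finite order, then running over all factorizations $n=ab$), whereas you pick a single witness $d$ up front; your version is a touch more economical but the underlying argument is identical.
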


\begin{proof}
Let $x\neq 1$. Then $A=\langle x^2\rangle$ is $T$-closed. If $x$ does not belong to $A$, then there is a $B\in T$ containing $x$
such that $A\cap B=1$. So, $\langle x\rangle\cap A=1$ and hence $x^2=1$. If $x\in A$, then $x$ has finite order $n$.
Suppose $n=ab$. Then $\langle x^a\rangle$ is $T$-closed and hence
$$
x\in \langle x^a\rangle\ or\ \langle x^a\rangle\cap \langle x\rangle =1.
$$
In the first case we have $x^b=1$ and in the second case, we have $x^a=1$. So $n$ is a prime.
\end{proof}

\begin{definition}
A subgroup $X\leq G$ is {\em topo-compact} subgroup, if any topen
covering of $X$ has a finite sub-covering, i.e. if $X\subseteq
\cup_{i\in I}A_i$, with $A_i\in T$, then there is a finite subset
$J\subseteq I$, such that $X\subseteq \cup_{i\in J}A_i$. In the
special case $X=G$, we say that $G$ is topo-compact.
\end{definition}

For example a group is topo-compact with respect to the discrete
topo-system, iff it is a finite union of cyclic subgroups. In fact
such a group is finite or infinite cyclic as the next proposition
shows.

\begin{proposition}
Let $(G, T)$ be topo-compact and suppose that $T$ is the discrete topo-system. Then $G$ is finite or infinite cyclic.
\end{proposition}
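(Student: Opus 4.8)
The plan is to turn the topological hypothesis into a purely group–theoretic one and then to determine the structure of $G$. Since $T=Sub(G)$, the family $\{\langle g\rangle : g\in G\}$ is a topen covering of $G$; by topo-compactness it has a finite subcovering, so $G=\langle g_1\rangle\cup\cdots\cup\langle g_n\rangle$ is a \emph{finite union of cyclic subgroups}. If every $g_i$ has finite order then $G$ itself is finite and we are done, so I may assume some $g_i$ has infinite order; then $G$ is infinite and the goal is to show that $G$ is infinite cyclic.

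Next I would apply B.\,H.~Neumann's covering lemma: if a group is a union of finitely many cosets of subgroups, then one of those subgroups has finite index (in fact the cosets of the infinite-index subgroups may be deleted from the covering). Applied to $G=\bigcup_i\langle g_i\rangle$ this yields a cyclic subgroup of finite index, necessarily infinite cyclic because $G$ is infinite; hence $G$ is a finitely generated, virtually infinite cyclic group. By the classical structure theorem for such groups, $G$ has a finite normal subgroup $F$ with $G/F\cong\mathbb{Z}$ or $G/F\cong D_\infty$, the infinite dihedral group. (If one prefers not to quote this, the same dichotomy can be obtained by hand from the core of the finite-index copy of $\mathbb{Z}$, its centralizer of index at most two, and Schur's theorem.) Now observe that an infinite cyclic subgroup is torsion-free, so every element of finite order of $G$ lies in one of the finitely many $\langle g_i\rangle$ that are finite; thus $G$ has only finitely many torsion elements. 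This rules out $G/F\cong D_\infty$: each of the infinitely many involutions of $D_\infty$ pulls back to a coset of $F$ all of whose elements have order dividing $2|F|$, giving infinitely many torsion elements. So $G/F\cong\mathbb{Z}$, and it remains to show $F=1$.

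For the final step I would split the extension $1\to F\to G\to\mathbb{Z}\to 1$ (possible since $\mathbb{Z}$ is free), write $G=F\rtimes_\phi\langle t\rangle$, and pass to the finite-index subgroup $H=\langle F,\,t^{e}\rangle$, where $e$ is the order of $\phi$; as $t^{e}$ centralizes $F$ and meets it trivially, $H\cong F\times\mathbb{Z}$. Intersecting the covering with $H$ exhibits $H$ again as a finite union of cyclic subgroups, so it suffices to prove that $F\times\mathbb{Z}$ is not a finite union of cyclic subgroups when $F\neq 1$. To see this, fix a prime $q$ dividing $\exp F$ and examine, for large $a$, the coset $F\times\{q^{a}\}$: all its elements have infinite order; a cyclic subgroup $\langle(\beta,d)\rangle$ with $d\ge 1$ meets it only when $d$ is a power of $q$, say $d=q^{s}$, and then it contributes exactly the one element whose $F$-coordinate is $\beta^{q^{a-s}}$, an element of order prime to $q$ once $a$ is large enough; the finite cyclic subgroups in the covering lie inside the torsion part $F\times\{0\}$ and miss this coset entirely. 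Hence the covering reaches $F\times\{q^{a}\}$ only in the proper subset $F_{q'}\times\{q^{a}\}$ (proper because $q\mid\exp F$), which is impossible. Therefore $F=1$ and $G\cong\mathbb{Z}$.

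I expect the last paragraph to be the genuine obstacle: reducing to $F\times\mathbb{Z}$ is routine, but excluding that product needs the ``prime-power coset'' device above (or something equivalent), which is the one non-formal ingredient; a smaller but still delicate point is organizing the passage through virtually cyclic groups cleanly. The finite case and the Neumann reduction, by contrast, should be entirely straightforward.
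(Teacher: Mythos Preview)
Your argument is correct, but it diverges from the paper's after the Neumann reduction. The paper uses the \emph{strong} form of Neumann's lemma (which you mention parenthetically but never exploit): after deleting the infinite-index subgroups one still has a covering of $G$ by finitely many cyclic subgroups, now all of finite index; since $G$ is infinite these are infinite cyclic, so $G$ is torsion free. The paper then observes that any $1\neq h\in G$ lies in some $\langle x_i\rangle$, whence $[G:\langle h\rangle]\le[G:\langle x_i\rangle]\cdot[\langle x_i\rangle:\langle h\rangle]<\infty$; thus every nontrivial subgroup has finite index, and a theorem of Fedorov yields $G\cong\mathbb{Z}$. Your route---the structure theorem for virtually cyclic groups, elimination of $D_\infty$ via a torsion count, splitting the extension, and the prime-power coset device in $F\times\mathbb{Z}$---is sound but considerably longer; it trades the black-box citation of Fedorov for an explicit, self-contained computation. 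Note, incidentally, that had you actually carried out the deletion step you allude to, torsion-freeness would follow immediately and force $F=1$ (and exclude the $D_\infty$ case) with no further work, rendering the last two paragraphs unnecessary.
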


\begin{proof}
We have $G=\cup_x\langle x\rangle$ and all $\langle x\rangle $ are
topen. So by topo-compactness, $G=\cup_{i=1}^n\langle x_i\rangle$.
Now by a theorem of B. H. Neumann (see \cite{Pass}, page 120), we
may assume that any $\langle x_i\rangle$ has  finite index. Suppose
$G$ is infinite. Then all $\langle x_i\rangle$ are infinite and
hence $G$ is torsion free. Assume that  $1\neq H\leq G$, and $1\neq
h\in H$. Then there is an index $i$ such that $h\in \langle
x_i\rangle$ and therefore $[\langle x_i\rangle: \langle h\rangle]$
is finite. So already $[G: \langle h\rangle]$ is finite and hence
$H$ has finite index. A theorem of Fedorov (see \cite{Scott}, page
446) says that if any non-trivial subgroup of an infinite group has
finite index, then the group is isomorphic to $\mathbb{Z}$. This
completes the proof.
\end{proof}

The concept of topo-compact groups may have many interesting
interpretations if we consider different topo-systems. It can be
verified that a $T$-closed subgroup of a topo-compact group is a
topo-compact subgroup.

If $(G, T)$ is a topo-group and $H\leq G$, then the set $S=\{ A\cap
H: \ A\in T\}$ generates a topo-system on $H$, which is {\em
induced} topo-system on $H$. We denote this topo-system  by
$T_{ind}$.  We give here a recursive construction of the elements of
$T_{ind}$. Let $T_0=S$ and for any ordinal $\alpha$, $$
T_{\alpha+1}=\{ \bigcap_{i=1}^n\langle B_{ij}: j\in I_i\rangle:\
n\geq 1, \{B_{ij}\}_{j\in I_i}\subseteq T_{\alpha}\}.
$$
Also for  limit ordinals we set
$$
T_{\alpha}=\bigcup_{\beta<\alpha}T_{\beta}.
$$
Then it can be shown that $T_{ind}=\cup_{\alpha}T_{\alpha}$. Now if
$(H, T_{ind})$ is  topo-compact topo-group, then it can be proved
that $H$ is a topo-compact subgroup of $G$. The converse situation
is very complicated because the topen subgroups of $(H, T_{ind})$
have very complex structures.

Also if $H\unlhd G$, then one checks that the set
$$
T^{\prime}=\{ \frac{AH}{H}:\ A\in T\}
$$
is a topo-system on the quotient group $G/H$. So, we call $G/H$, the
quotient topo-system. If $\{ (G_i, T_i)_{i\in I}\}$ is a family of
topo-groups and $G=\prod_{i\in I}G_i$, then the set
$$
T=\{ \prod_{i\in I}A_i:\ A_i\in T_i\ and\ almost\ all\ A_i=G_i\}
$$
is a topo-system on $G$. Note that the situation here is much better
than the case of ordinary product topology because of the following
two trivial equalities:

1- $(\prod_{i\in I}A_i)\cap(\prod_{i\in I}B_i)=\prod_{i\in I}(A_i\cap B_i)$.

2- $\langle \prod_{i\in I}A_{ij}: j\in J\rangle=\prod_{i\in I}\langle A_{ij}: j\in J\rangle$.

\begin{definition}
Let $(G, T)$ and $(H, S)$ be two topo-groups. A continuous map (or a
{\em topomorphism}) is any homomorphism $f: G\to H$ with the
property $f^{-1}(B)\in T$, for all $B\in S$. An invertible
topomorphism with a continuous inverse is a {\em topeomorphism}.
\end{definition}

Clearly the projections $\pi_j:\prod_{i\in I}G_i\to G_j$ are
topomorphisms and also the natural map $q:G\to G/H$ is also a
topomorphism. If $H$ is a subgroup of a topo-group $(G, T)$, then
the inclusion map $j:(H, T_{ind})\to (G, T)$ is a topomorphism, too.

\begin{remark}
Let $(G, T)$ be a topo-group. We can define a topology on $G$ with
the basis $T$. Let us denote this topology by $T^{\ast}$. A subset
$X\subseteq G$ is open, if $X$ is a union of elements of $T$. This
topology is never Hausdorff but $G$ is compact if and only if $G$ is
topo-compact. For a subgroup $H\leq G$ with the induced topo-system
$T_{ind}$, we have $(T^{\ast})_{ind}\subseteq (T_{ind})^{\ast}$.
\end{remark}

We close  this section, by two more examples of interesting topo-systems on arbitrary groups.

\begin{example}
Let $G$ be a group and $H\leq K\leq G$. We define
$$
T^{H,K}=\{ A\leq G:\ [A,K]\subseteq H\}\cup \{ G\}.
$$
Clearly $1$ and $G$ belong to $T^{H,K}$. This set is closed under
intersection, so let $\{ A_i\}_{i \in I}$ be a family of elements of
$T^{H,K}$. Suppose $a_1, \ldots, a_n$ are elements from the union of
$A_i$'s and $x\in K$. We have
$$
[a_1a_2\ldots a_n, x]=a_1\ldots a_n x a_n^{-1}\ldots a_1^{-1}x^{-1}.
$$
Now, for any $u\in K$, we have $a_iua_i^{-1}u^{-1}\in H$, so for some $h_n\in H$, we have
$$
[a_1a_2\ldots a_n, x]=a_1\ldots a_{n-1}(h_nx)a_{n-1}^{-1}\ldots a_1^{-1}x^{-1}.
$$
Continuing this way, for some $h_{n-1}\in H$, we have
$$
[a_1a_2\ldots a_n, x]=a_1\ldots a_{n-2}(h_{n-1}h_nx)a_{n-2}^{-1}\ldots a_1^{-1}x^{-1}.
$$
Hence, finally we get
$$
[a_1a_2\ldots a_n, x]=h_1h_2\ldots h_n,
$$
for some $h_1, \ldots, h_n\in H$. This shows that $\langle A_i: i\in I\rangle \in T^{H,K}$.
\end{example}

\begin{example}
Let $G$ be group and $H\leq G$ be a fixed subgroup. Define
$$
T_{conj(H)}=\{ A\leq G: H\subseteq N_G(A)\}.
$$
It can be verified that $T_{conj(H)}$ is a topo-system on $G$.
\end{example}

\section{Hausdorff topo-groups}
Suppose $(G, T)$ is a topo-group and for any $x, y\in G$ with $\langle x\rangle \cap\langle y\rangle =1$, there exist $A, B\in T$ with the properties
$$
x\in A,\ y\in B,\ and\ A\cap B=1.
$$
Then we say that $(G, T)$ is a Hausdorff topo-system. A subgroup
$T\leq G$ is {\em weak $T$-closed}, if for all $x$ with $\langle
x\rangle \cap A=1$, there exists $B\in T$ such that $A\cap B=1$.

\begin{proposition}
Let $(G, T)$ be a Hausdorff topo-group and $A$ be a topo-compact subgroup. Then $A$ is weak $T$-closed.
\end{proposition}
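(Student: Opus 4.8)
The plan is to prove the contrapositive-flavored statement directly: take an arbitrary $x \in G$ with $\langle x \rangle \cap A = 1$ and manufacture a topen subgroup $B$ with $x \in B$ (so $B \neq 1$) and $A \cap B = 1$. The Hausdorff hypothesis gives, for each $a \in A$, separation between $x$ and $a$ — but only when $\langle x \rangle \cap \langle a \rangle = 1$, which need not hold for every $a$. So first I would dispose of the elements $a \in A$ for which $\langle x \rangle \cap \langle a \rangle \neq 1$; I claim there is at most a controlled family of these. Indeed, if $\langle x \rangle \cap \langle a \rangle \neq 1$, then some nontrivial power $x^k$ lies in $\langle a \rangle \subseteq A$, contradicting $\langle x \rangle \cap A = 1$ unless $x$ has infinite order and the intersection is still trivial — wait, more carefully: $x^k \in \langle a \rangle \subseteq A$ and $x^k \in \langle x \rangle$ forces $x^k \in \langle x \rangle \cap A = 1$, so $x^k = 1$; thus if $x$ has infinite order, $\langle x \rangle \cap \langle a \rangle = 1$ automatically for all $a \in A$, and the separation hypothesis applies to every $a \in A$ at once.

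So the cleanest route is: for each $a \in A \setminus \{1\}$, use Hausdorffness (in the infinite-order case, unconditionally; in the finite-order case one must handle $a$ with $\langle x\rangle \cap \langle a\rangle \neq 1$ separately, and I expect this to be the technical crux) to get topen $A_a \ni a$ and $B_a \ni x$ with $A_a \cap B_a = 1$. The family $\{A_a : a \in A \setminus\{1\}\}$ together with the trivial subgroup does not quite cover $A$, but $\{A_a\}_{a \in A\setminus\{1\}}$ covers $A \setminus \{1\}$, hence $A \subseteq \bigcup_a A_a$ once we note $1 \in A_a$ for any $a$. By topo-compactness of $A$ there is a finite subcover $A \subseteq A_{a_1} \cup \cdots \cup A_{a_n}$. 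Now set $B = B_{a_1} \cap B_{a_2} \cap \cdots \cap B_{a_n}$; this is topen by axiom (c) of a topo-system (finite intersections), it contains $x$ since each $B_{a_i}$ does, and $A \cap B \subseteq \bigcup_i (A_{a_i} \cap B) \subseteq \bigcup_i (A_{a_i} \cap B_{a_i}) = 1$. Hence $B$ witnesses that $A$ is weak $T$-closed.

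The step I expect to be the main obstacle is the finite-order case: if $x$ has finite order, there may be $a \in A$ with $\langle x \rangle \cap \langle a \rangle \neq 1$, and then the Hausdorff axiom gives us nothing about separating $x$ from that particular $a$. One would need to argue that such $a$ can be avoided — for instance, by replacing the covering elements or by observing that the condition $\langle x \rangle \cap A = 1$ still constrains which $a$ are problematic (it forces $\langle x \rangle \cap \langle a \rangle$ to be a nontrivial subgroup of $\langle x \rangle$ meeting $A$ trivially only through... ) — in fact $\langle x\rangle \cap \langle a\rangle \le \langle x\rangle \cap A = 1$, so there are \emph{no} problematic $a$ at all: the hypothesis $\langle x\rangle \cap A = 1$ already guarantees $\langle x\rangle \cap \langle a\rangle = 1$ for every $a \in A$. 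So the obstacle dissolves, and the covering-plus-finite-intersection argument goes through cleanly; the only care needed is the bookkeeping that a finite intersection of topen subgroups is topen and that $1 \in A_a$ so the $A_a$ genuinely cover $A$.
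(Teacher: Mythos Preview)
The paper states this proposition without proof, so there is nothing to compare against; your argument is correct and is precisely the expected adaptation of the classical proof that compact subsets of Hausdorff spaces are closed. The key observation---that $\langle x\rangle \cap \langle a\rangle \le \langle x\rangle \cap A = 1$ for every $a\in A$, so the Hausdorff hypothesis applies to \emph{every} pair $(x,a)$---is exactly right, and once you have it the separate-cover-intersect routine goes through verbatim. Your detour through the infinite-/finite-order dichotomy was unnecessary, as you yourself realized by the end. One triviality worth recording: if $A=1$ the index set $A\setminus\{1\}$ is empty and your covering is vacuous; dispose of this case by taking $B=G$.
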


Clearly any discrete topo-group is Hausdorff. If $G$ is infinite and
$(G, T_{cf})$ is Hausdorff, then for all non-identity $x, y\in G$,
we have
$$
\langle x\rangle \cap\langle y\rangle\neq 1,
$$
so the intersection of any two non-trivial subgroups of $G$ is
non-trivial. Clearly such a group is torsion free or a $p$-group for
some prime. The groups $\mathbb{Z}$, $\mathbb{Q}$ and
$\mathbb{Z}_{p^{\infty}}$ are examples of such groups. Torsion free
non-abelian groups with this property are  constructed by Adian and
Olshanskii, \cite{Olshan}. An elementary argument shows that if $G$
is an R-group (i.e. a group satisfying $x^m=y^m\Rightarrow x=y$ for
any non-zero $m$), then $G$ can be embedded in $\mathbb{Q}$. The
next proposition shows that the $p$-group case can be investigate by
a very elementary argument, and so the hardest part of the problem,
is the case of torsion free groups which are not R-groups.

\begin{proposition}
Let $G$ be an infinite $p$-group which is Hausdorff with respect to
the topo-system $T_{cf}$. Then $G$ has a unique subgroup of order
$p$. The converse is also true.
\end{proposition}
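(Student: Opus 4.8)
The plan is to turn the problem into a purely group-theoretic statement by using the observation recorded just before the proposition. When $G$ is infinite, $(G,T_{cf})$ is Hausdorff if and only if $\langle x\rangle\cap\langle y\rangle\neq 1$ for all nonidentity $x,y\in G$: if both separating subgroups $A,B$ had finite index, so would $A\cap B$, which therefore cannot be trivial in an infinite group, so one of $A,B$ must be the identity subgroup, forcing $x=1$ or $y=1$; conversely, if no pair of nonidentity elements with $\langle x\rangle\cap\langle y\rangle=1$ exists, then the Hausdorff requirement is vacuous except in the degenerate instances $x=1$ or $y=1$, which are handled directly by taking $A=1$ and $B=G$. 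Thus the proposition reduces to: an infinite $p$-group $G$ has a unique subgroup of order $p$ if and only if any two nontrivial cyclic subgroups of $G$ meet nontrivially.

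For the forward direction I would assume this intersection property. Since $G$ is a nontrivial $p$-group, choosing $1\neq g\in G$ we have $g$ of order $p^{k}$ with $k\geq 1$, and then $g^{p^{k-1}}$ has order $p$, so $G$ has at least one subgroup of order $p$. If it had two distinct ones, $P_{1}$ and $P_{2}$, then $P_{1}\cap P_{2}=1$ because distinct subgroups of prime order intersect trivially, contradicting the intersection property; hence the subgroup of order $p$ is unique.

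For the converse, let $Z$ be the unique subgroup of order $p$ in $G$. For any $1\neq x\in G$, the subgroup $\langle x\rangle$ is a finite cyclic $p$-group (it cannot be infinite cyclic, as $G$ is a $p$-group), hence contains exactly one subgroup of order $p$; by uniqueness in $G$ that subgroup is $Z$, so $Z\leq\langle x\rangle$. Therefore $Z\leq\langle x\rangle\cap\langle y\rangle$ for all nonidentity $x,y\in G$, the intersection property holds, and by the first paragraph $(G,T_{cf})$ is Hausdorff.

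I do not expect a genuine obstacle here; the argument is elementary. The only points needing care are the reduction in the first paragraph — in particular getting the degenerate cases of the Hausdorff definition right — together with the two standard facts used (a cyclic $p$-group has a unique subgroup of order $p$, and distinct subgroups of prime order meet trivially). One should also keep in mind the convention that ``$p$-group'' here means that every element has finite $p$-power order, which is exactly what guarantees both the existence of an element of order $p$ and the finiteness of each $\langle x\rangle$.
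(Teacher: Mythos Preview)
Your proof is correct and follows essentially the same approach as the paper: both reduce the Hausdorff condition on $(G,T_{cf})$ to the statement that any two nontrivial (cyclic) subgroups meet nontrivially, and then use that a nonidentity element of a $p$-group generates a finite cyclic $p$-group containing a subgroup of order $p$. The only cosmetic difference is that the paper phrases the forward direction by showing the order-$p$ subgroup $A$ lies in every nontrivial subgroup $B$ (since $A\cap B\neq 1$ forces $A\subseteq B$), whereas you argue by contradiction that two distinct order-$p$ subgroups would intersect trivially; these are equivalent one-line observations.
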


\begin{proof}
Let $G$ be a $p$-group and the intersection of any two non-trivial
subgroup of $G$ be non-trivial. Suppose $x$ is a non-identity
element of $G$. Then $\langle x\rangle$ has a subgroup $A$ of order
$p$. For any $1\neq B\leq G$, we have $A\cap B\neq 1$, so
$A\subseteq B$ and hence $A$ is a unique subgroup of $G$ of order
$p$. Conversely let $G$ has a unique subgroup of order $p$, say $A$.
Then clearly for any non-trivial subgroup  $B$, we have $A\subseteq
B$, and hence every two non-trivial subgroups of $G$ have
non-trivial intersection.
\end{proof}

Note that the subgroup $A$ in the proof the above proposition is
{\em minimum} in the set of non-trivial normal subgroups of $G$. So,
by a well-known theorem of Birkhoff, such a group $G$ is
sub-directly irreducible.

\begin{corollary}
Any  infinite $p$-group which is Hausdorff with respect to the topo-system $T_{cf}$ is sub-directly irreducible.
\end{corollary}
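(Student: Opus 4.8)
The plan is to combine the proposition immediately preceding the corollary with the classical Birkhoff subdirect representation theorem. The proposition tells us that an infinite $p$-group $G$ that is Hausdorff with respect to $T_{cf}$ has a \emph{unique} subgroup $A$ of order $p$; as the remark following that proposition notes, this $A$ is then contained in every non-trivial subgroup of $G$, in particular in every non-trivial normal subgroup. Hence $A$ is the minimum element of the poset of non-trivial normal subgroups of $G$ (ordered by inclusion), and in particular the intersection of all non-trivial normal subgroups of $G$ contains $A$ and so is itself non-trivial.

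First I would recall the algebraic formulation of subdirect irreducibility: a group $G$ is subdirectly irreducible if and only if the intersection of all its non-trivial normal subgroups is non-trivial (equivalently, $G$ has a smallest non-trivial normal subgroup, the \emph{monolith}). This is exactly Birkhoff's theorem specialised to groups. Then the proof is essentially one line: by the preceding proposition, $A = \langle a \rangle$ of order $p$ lies inside every non-trivial subgroup of $G$, hence inside every non-trivial normal subgroup, so $\bigcap \{ N : 1 \neq N \unlhd G \} \supseteq A \neq 1$, and $G$ is subdirectly irreducible. One should also note that the hypothesis forces $G$ to be torsion (it is a $p$-group) and the uniqueness of the order-$p$ subgroup is the real content being imported; the corollary is then just a restatement in standard universal-algebra language.

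There is essentially no obstacle here, since all the work was done in the proposition; the only thing to be careful about is citing the correct form of Birkhoff's theorem (the characterisation via a minimum non-trivial normal subgroup / non-trivial monolith) so that the deduction is clean. I would write the proof as follows.

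\begin{proof}
By the previous proposition, $G$ has a unique subgroup $A$ of order $p$. As observed above, for every non-trivial subgroup $B\leq G$ we have $A\subseteq B$; in particular $A\subseteq N$ for every non-trivial normal subgroup $N\unlhd G$. Therefore the intersection of all non-trivial normal subgroups of $G$ contains $A$ and is hence non-trivial, so $A$ is the monolith of $G$. By Birkhoff's theorem, a group whose lattice of normal subgroups has a smallest non-trivial element is subdirectly irreducible. Thus $G$ is subdirectly irreducible.
\end{proof}
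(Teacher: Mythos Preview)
Your proof is correct and follows exactly the argument the paper gives in the remark immediately preceding the corollary: the unique subgroup $A$ of order $p$ from the previous proposition is contained in every non-trivial (normal) subgroup of $G$, so it is the minimum non-trivial normal subgroup, and Birkhoff's theorem yields subdirect irreducibility. There is no difference in approach.
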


For any group $G$, the topo-group $(G, T_n)$ is Hausdorff, if and only if $G$ satisfies
$$
\langle x\rangle \cap\langle y\rangle=1\Rightarrow \langle x^G\rangle \cap\langle y^G\rangle=1.
$$
In the next theorem, we give some properties of this kind of groups.
Note that a $\mathbb{Q}$-group is a group in which every element $x$
has a unique $m$-th root $x^{1/m}$, for every $m$.

\begin{theorem}
Let $G$ be a torsion free group which is Hausdorff with respect to
the topo-system $T_n$. Then for any $x, u\in G$, there are non-zero
integers $m$ and $n$ such that $ux^nu^{-1}=x^m$. Further, if $G$ is
also a $\mathbb{Q}$-group, then there exists a  family of normal
subgroups $A_i$, such that

1- $G= \bigcup_i A_i$.

2- $A_i\cong \mathbb{Q}$.

3- $A_i\cap A_j=1$, for $i\neq j$.

As a result, $G$ is abelian.
\end{theorem}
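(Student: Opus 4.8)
The plan is to unwind the Hausdorff condition for $T_n$ and exploit torsion-freeness to force a multiplicative relation between conjugates of powers. First I would fix $x, u \in G$ with $x \neq 1$ and consider the cyclic subgroup $\langle x \rangle$ together with its conjugate $\langle uxu^{-1}\rangle = \langle x \rangle^{u}$; both have trivial intersection with… well, the point is to find \emph{some} pair of elements on which the Hausdorff hypothesis bites. The Hausdorff condition for $T_n$ says precisely that $\langle a\rangle \cap \langle b\rangle = 1$ implies $\langle a^G\rangle \cap \langle b^G\rangle = 1$; contrapositively, if the normal closures meet nontrivially then already the cyclic subgroups meet. Since $x$ and $uxu^{-1}$ are conjugate, they generate normal closures that are \emph{equal} (both equal $\langle x^G\rangle$), so $\langle x^G\rangle \cap \langle (uxu^{-1})^G\rangle = \langle x^G\rangle \neq 1$. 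Hence $\langle x\rangle \cap \langle uxu^{-1}\rangle \neq 1$, which gives a nonidentity element expressible as $x^n = (uxu^{-1})^k = ux^{k}u^{-1}$ for some nonzero integers $n, k$ (nonzero because $G$ is torsion free, so no nontrivial power can be the identity). Relabelling gives $ux^{m}u^{-1} = x^{n}$ as claimed. I should double-check the edge case $x = 1$, which is trivial.

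For the second part, assume additionally that $G$ is a $\mathbb{Q}$-group. The key structural input is the relation just proved: for each pair $x, u$ there are nonzero $m, n$ with $u x^{n} u^{-1} = x^{m}$. I would first argue that in a torsion-free $\mathbb{Q}$-group this relation can be ``divided down'': since roots are unique, from $u x^{n} u^{-1} = x^{m}$ one wants to extract $u x u^{-1} = x^{m/n}$ in an appropriate sense, i.e. that the isolator/divisible hull of $\langle x\rangle$ is normalized by $u$ and $u$ acts on it by the rational scalar $m/n$. Concretely, let $A_x$ be the set of elements $y \in G$ with $y^{k} \in \langle x\rangle$ for some nonzero $k$ together with all their roots — in a $\mathbb{Q}$-group this is the unique smallest divisible subgroup containing $x$, and it is isomorphic to $\mathbb{Q}$ (a torsion-free divisible group of rank one). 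Using uniqueness of roots one checks $A_x$ is a subgroup, and the relation $ux^nu^{-1} = x^m$ together with uniqueness of roots shows $u A_x u^{-1} = A_x$ for every $u$, so each $A_x$ is normal. Every element $x \neq 1$ lies in $A_x \cong \mathbb{Q}$, giving $G = \bigcup_x A_x$. For the disjointness, if $A_x \cap A_y \neq 1$ then they share a nonidentity element $z$, and since $A_x, A_y$ are both the divisible rank-one hull of $\langle z\rangle$, they coincide; so distinct $A_i$'s meet trivially once we pass to a set of representatives. Finally, abelianness: take any $a, b \in G$, both nonidentity (the identity commutes with everything). If they lie in the same $A_i \cong \mathbb{Q}$ they commute. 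If $a \in A_i$, $b \in A_j$ with $i \neq j$, then $A_i \cap A_j = 1$; but $A_i$ and $A_j$ are normal, so $[a,b] = (aba^{-1})b^{-1} \in A_j$ and also $[a,b] = a(ba^{-1}b^{-1}) \in A_i$, hence $[a,b] \in A_i \cap A_j = 1$, so $a$ and $b$ commute. Therefore $G$ is abelian.

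The main obstacle I anticipate is the second step of part two: rigorously defining $A_x$ and proving it is a normal subgroup isomorphic to $\mathbb{Q}$. The subgroup property requires that if $y^k, z^\ell \in \langle x\rangle$ then $(yz)$ has a power in $\langle x\rangle$ — this is not obvious for nonabelian $G$ and really needs the relation from part one (which makes the conjugation action on $\langle x\rangle$'s hull by rational scalars, forcing local abelianness around each $A_x$) plus uniqueness of roots; one has to bootstrap from ``$u$ normalizes $A_x$ and acts by a scalar'' to ``$A_x$ is abelian,'' and then assemble the $A_x$ into a genuine subgroup. Once $A_x$ is known to be an abelian normal divisible rank-one subgroup, identifying it with $\mathbb{Q}$ and deducing the three listed properties plus commutativity is the routine commutator argument sketched above. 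I would also want to confirm that the family $\{A_i\}$ in the statement is meant to be indexed by representatives (so that property 3 makes sense), rather than by all $x \in G$.
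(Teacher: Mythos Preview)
Your proposal is correct and, for the second half, essentially identical to the paper's argument: the paper also takes $A_i=\{x_i^{\alpha}:\alpha\in\mathbb{Q}\}$, observes that part one plus uniqueness of roots gives $ux_iu^{-1}=x_i^{m/n}$ so each $A_i$ is normal, and then concludes abelianness from $[A_i,A_j]\subseteq A_i\cap A_j=1$. Your worry about $A_x$ being a subgroup is easily dispatched in the $\mathbb{Q}$-group setting: uniqueness of roots forces all rational powers of $x$ to commute with $x$ (and hence with each other), so $\{x^\alpha:\alpha\in\mathbb{Q}\}$ is visibly an abelian subgroup; the paper takes this for granted.

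The one genuine difference is in part one. The paper introduces the equivalence relation $x\equiv y\Leftrightarrow\langle x\rangle\cap\langle y\rangle\neq1$ on $G\setminus\{1\}$, shows $\langle x^G\rangle\subseteq\bigcup_{y\in E(x)}\langle y\rangle$, and then extracts the relation $ux^nu^{-1}=x^m$ via an auxiliary element $y\in E(x)$ with $uxu^{-1}\in\langle y\rangle$. Your route is shorter and cleaner: since $x$ and $uxu^{-1}$ have the \emph{same} normal closure, the contrapositive of the Hausdorff condition immediately gives $\langle x\rangle\cap\langle uxu^{-1}\rangle\neq1$, and the desired relation drops out. The paper's equivalence relation is not wasted, though --- it resurfaces implicitly in the indexing of the $A_i$ (distinct $A_i$'s correspond exactly to distinct equivalence classes), which is precisely the ``set of representatives'' issue you flagged at the end.
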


\begin{proof}
Let $A=G\setminus 1$ and define a binary relation on $A$ by
$$
x\equiv y \Leftrightarrow \langle x\rangle \cap\langle y\rangle \neq 1.
$$
This is an equivalence relation on $A$. Let $E(x)$ be the
equivalence class of $x$ and $\{ E_i=E(x_i): i\in I\}$ the set of
all such classes. Then we have
$$
G=\bigcup_i \bigcup_{y\in E_i}\langle y\rangle.
$$
Now, since
$$
\langle x\rangle \cap\langle y\rangle =1\Rightarrow  \langle x^G\rangle \cap\langle y\rangle=1,
$$
so, $\langle x^G\rangle\subseteq \bigcup_{y\in E(x)}\langle
y\rangle$. Let $u\in G$ be arbitrary. Then there is a $y\in E(x)$
such that $uxu^{-1}\in \langle y\rangle$. Hence, for some $i, m$ and
$n$ we have
$$
uxu^{-1}=y^i, \ \ x^m=y^n.
$$
Therefore, for any $x, u\in G$, there are non-zero integers $m$ and
$n$ such that $ux^nu^{-1}=x^m$. Now, suppose $G$ is a
$\mathbb{Q}$-group. Then we have
\begin{eqnarray*}
E_i&=&\{ y\in G:\ \exists m, n\neq 0, \ x_i^m=y^n\}\\
   &\subseteq& \{ x_i^{\alpha}:\ \alpha\in\mathbb{Q}\}\\
   &\subseteq& G.
\end{eqnarray*}

Suppose $A_i=\{ x_i^{\alpha}:\ \alpha\in\mathbb{Q}\}$. Then any
$A_i$ is a subgroup of $G$ and clearly, we have 1-2-3. Note that,
since for any $u\in G$ we have $ux_iu^{-1}=x_i^{\alpha}$, for some
$\alpha\in \mathbb{Q}$, so $A_i$ is a normal subgroup. Now, $[A_i,
A_j]\subseteq A_i\cap A_j$, so by 3, we have $[A_i, A_j]=1$. This
shows that $G$ is abelian.
\end{proof}

We also can use the Malcev completion of torsion free locally
nilpotent groups, to prove the next result on Hausdorff groups with
respect to $T_n$.

\begin{theorem}
Let $G$ be a torsion free locally nilpotent group which is Hausdorff
with respect to $T_n$. Then $G$ is abelian.
\end{theorem}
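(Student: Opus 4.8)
The plan is to reduce the problem to the previous theorem by passing to the Malcev completion. First I would invoke the hypothesis that $G$ is torsion free and locally nilpotent, so that $G$ embeds in its Malcev completion $\widehat{G}$, a torsion free locally nilpotent $\mathbb{Q}$-group (a radicable torsion free locally nilpotent group) in which every element of $G$ has unique roots of all orders. The key preliminary step is to check that the Hausdorff property with respect to $T_n$ is inherited, in the appropriate sense, by $\widehat{G}$: if $x,u\in\widehat{G}$ then suitable powers $x^{k},u^{\ell}$ lie in $G$, and the condition $\langle a\rangle\cap\langle b\rangle=1\Rightarrow\langle a^{\widehat{G}}\rangle\cap\langle b^{\widehat{G}}\rangle=1$ for $a,b\in\widehat{G}$ should follow from the same condition on $G$ together with the fact that roots are unique, hence intersections of cyclic subgroups and their normal closures are controlled by what happens on powers lying in $G$.

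Once $\widehat{G}$ is known to be torsion free, Hausdorff with respect to $T_n$, and a $\mathbb{Q}$-group, the previous theorem applies directly and gives that $\widehat{G}$ is abelian. Since $G\leq\widehat{G}$, the subgroup $G$ is then abelian as well, which is the desired conclusion. So the structure is: (i) embed $G$ in $\widehat{G}$; (ii) verify $\widehat{G}$ is a torsion free $\mathbb{Q}$-group that is Hausdorff with respect to $T_n$; (iii) quote the previous theorem to conclude $\widehat{G}$ is abelian; (iv) restrict to $G$.

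The main obstacle is step (ii), specifically transferring the Hausdorff condition from $G$ up to $\widehat{G}$. The implication $\langle x\rangle\cap\langle y\rangle=1\Rightarrow\langle x^{G}\rangle\cap\langle y^{G}\rangle=1$ is a statement about all pairs, and in $\widehat{G}$ one must handle elements that are genuine roots not already in $G$. The argument I would use: if $x,y\in\widehat{G}$ with $\langle x\rangle\cap\langle y\rangle=1$, pick $k$ with $x^k,y^k\in G$; one shows $\langle x^k\rangle\cap\langle y^k\rangle=1$ (using torsion freeness), hence $\langle (x^k)^{G}\rangle\cap\langle (y^k)^{G}\rangle=1$ in $G$, and then — using uniqueness of roots in $\widehat{G}$ and the fact that conjugation in $\widehat{G}$ by an element $u$ with $u^m\in G$ is determined, via root extraction, by conjugation by $u^m$ — lift this to $\langle x^{\widehat{G}}\rangle\cap\langle y^{\widehat{G}}\rangle=1$. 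Making the root-uniqueness bookkeeping precise here, and making sure the normal closure behaves well under passing between $x$ and its powers, is where the real work lies; the rest is a routine appeal to the Malcev completion machinery and to the preceding theorem.
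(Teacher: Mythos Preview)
Your high-level idea --- pass to the Malcev completion $\widehat{G}$ --- matches the paper, but the execution diverges at exactly the point you flag as the obstacle. You want to lift the full Hausdorff property to $\widehat{G}$ and then invoke the previous theorem as a black box. The paper does \emph{not} do this, and for good reason: your step~(ii) is genuinely hard to carry out directly. The normal closure $\langle x^{\widehat{G}}\rangle$ involves conjugation by elements $u\in\widehat{G}\setminus G$, and your proposed control (``conjugation by $u$ is determined via root extraction by conjugation by $u^m\in G$'') is not straightforward --- in a non-abelian group $(uxu^{-1})^m\neq u^mxu^{-m}$, so knowing where $u^m$ sends $x$ does not immediately tell you where $u$ sends it. Making this precise would essentially require you to already know a great deal about commutation in $\widehat{G}$, which is circular.

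The paper sidesteps this entirely. It does not lift the Hausdorff condition to $G^\ast$; instead it uses the Hausdorff hypothesis only in $G$, to extract the single consequence that for all $x,u\in G$ there are nonzero integers $m,n$ with $ux^nu^{-1}=x^m$ (the first line of the previous proof). It then chooses the equivalence-class representatives $x_i$ \emph{inside $G$}, forms $A_i=\{x_i^{\alpha}:\alpha\in\mathbb{Q}\}\leq G^\ast$, and argues directly: from $x_jx_i^nx_j^{-1}=x_i^m$ and unique roots one gets $x_jx_ix_j^{-1}=x_i^{m/n}\in A_i$, hence $[x_i,x_j]\in A_i\cap A_j=1$, so the $x_i$ commute, hence the $A_i$ commute, hence $G\subseteq\bigcup_iA_i$ is abelian. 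The point is that only conjugation \emph{by elements of $G$} is ever needed, so the Hausdorff property never has to be transported to the completion. Your plan would work if you could complete step~(ii), but the paper's route shows that this detour is unnecessary.
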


\begin{proof}
The same argument as above, shows that for any $x, u\in G$, there
are non-zero integers $m$ and $n$ such that $ux^nu^{-1}=x^m$. Now,
suppose $G^{\ast}$ is the Malcev completion of $G$. We know that
$G^{\ast}$ is a $\mathbb{Q}$-group. By the notations of the above
proof, let $A_i=\{ x_i^{\alpha}:\ \alpha\in \mathbb{Q}\}$, which is
a subgroup of $G^{\ast}$. A similar argument shows that

1- $G\subseteq \bigcup_i A_i$.

2- $A_i\cong \mathbb{Q}$.

3- $A_i\cap A_j=1$, for $i\neq j$.

Since $x_jx_ix_j^{-1}=x_i^{\alpha}$, for some $\alpha\in
\mathbb{Q}$, so $[x_i, x_j]\in A_i$ and similarly it belongs also to
$A_j$. This shows that $x_i$ and $x_j$ are commuting. Therefore, we
have $[x_i^p, x_j^q]=1$, for all $p, q\in \mathbb{Q}$. So, we have
$[A_i, A_j]=1$, which shows that $G$ is abelian.
\end{proof}

\section{Filters of Subgroups}

For standard notions of filter theory, the reader could use
\cite{Bur}. A {\em filter of subgroups} (or a subgroup filter) is
any subset $\mathfrak{F}\subseteq Sub^{\ast}(G)$, with the following
properties:

a) $G\in \mathfrak{F}$.

b) if $A\leq B\leq G$ and $A\in \mathfrak{F}$, then $B\in\mathfrak{F}$.

c) if $A$ and $B\in \mathfrak{F}$, then $A\cap B\in \mathfrak{F}$.

\begin{example}
For any group $G$, the sets $\{G\}$ and $Sub^{\ast}(G)$ are trivial
examples of subgroup filter. For each $x\in G$, there exists the
{\em principal} filter of subgroups
$$
\mathfrak{F}_x=\{ A\in Sub^{\ast}(G): x\in A\}.
$$
If $G$ is an infinite group, then the set
$$
\mathfrak{F}_{cf}=\{ A\leq G: [G:A]<\infty\}
$$
is the {\em cofinite} filter of subgroups.
\end{example}

Like, ordinary filers, any filter of subgroups has {\em finite intersection property} ("fip" for short):
$$
A_1, \ldots, A_n\in \mathfrak{F}\Rightarrow \bigcap_{i=1}^nA_i\neq 1.
$$
On the other hand, any subset $S\subseteq Sub^{\ast}(G)$ with fip is
contained in a unique minimal filter of subgroups (which is the
subgroup filter generated by $S$); if we define
$$
S_{\ast}=\{ \bigcap_{i=1}^nA_i: n\geq 1\ and\ A_i\in S\},
$$
then the subgroup filter generated by $S$ is equal to
$$
\mathfrak{F}_S=\{ B\leq G: A\subseteq B\ for\ some\ A\in S_{\ast}\}.
$$

Let $P(G)$ be the power set of $G$ and $\mathfrak{F}_1\subseteq
P(G)$ be an ordinary filter. Then clearly the set
$\mathfrak{F}=\mathfrak{F}_1\cap Sub^{\ast}(G)$ is a filter of
subgroups. The converse is also true:

\begin{proposition}
Let $\mathfrak{F}\subseteq Sub^{\ast}(G)$ be a subgroup filter. Then there is an ordinary filter $\mathfrak{F}_1\subseteq P(G)$ such that
$$
\mathfrak{F}=\mathfrak{F}_1\cap Sub^{\ast}(G).
$$
\end{proposition}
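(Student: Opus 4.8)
The plan is to take for $\mathfrak{F}_1$ the ordinary filter on $P(G)$ \emph{generated} by $\mathfrak{F}$, where each subgroup belonging to $\mathfrak{F}$ is now regarded merely as a subset of $G$. Concretely, I would set
$$
\mathfrak{F}_1=\{ X\subseteq G:\ A\subseteq X\ \text{for some}\ A\in\mathfrak{F}\}.
$$
Because the members of $\mathfrak{F}$ are already closed under finite intersections (property (c) of a subgroup filter), $\mathfrak{F}$ is its own filter base inside $P(G)$, so there is no need to throw in finite intersections first.

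First I would verify that $\mathfrak{F}_1$ is an ordinary (proper) filter on $G$. It contains $G$ since $G\in\mathfrak{F}$; it is upward closed by construction; and it is closed under finite intersection because if $A\subseteq X$ and $B\subseteq Y$ with $A,B\in\mathfrak{F}$, then $A\cap B\in\mathfrak{F}$ and $A\cap B\subseteq X\cap Y$, whence $X\cap Y\in\mathfrak{F}_1$. Finally $\emptyset\notin\mathfrak{F}_1$: every $A\in\mathfrak{F}\subseteq Sub^{\ast}(G)$ is a genuine subgroup and hence contains the identity, so it is non-empty, and therefore no member of $\mathfrak{F}_1$ can be empty. (One could also invoke the fip of $\mathfrak{F}$ here, but mere non-emptiness already suffices.)

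Next I would check the identity $\mathfrak{F}=\mathfrak{F}_1\cap Sub^{\ast}(G)$. The inclusion $\subseteq$ is immediate, since $A\in\mathfrak{F}$ gives $A\in Sub^{\ast}(G)$ by definition and $A\subseteq A$ gives $A\in\mathfrak{F}_1$. For the reverse inclusion, take $H\in\mathfrak{F}_1\cap Sub^{\ast}(G)$. Then $H$ is a non-identity subgroup and there is $A\in\mathfrak{F}$ with $A\subseteq H$, i.e. $A\leq H\leq G$; now property (b) of a subgroup filter forces $H\in\mathfrak{F}$.

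The whole argument is essentially bookkeeping, transcribing the three filter axioms from the ``subgroup'' setting to the ``subset'' setting and back, so there is no serious obstacle. The only point deserving a moment's care is checking that $\mathfrak{F}_1$ is a \emph{proper} filter, i.e. that $\emptyset\notin\mathfrak{F}_1$; this is precisely where one uses that the members of $\mathfrak{F}$ are honest (non-empty) subgroups rather than arbitrary subsets.
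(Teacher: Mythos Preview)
Your construction of $\mathfrak{F}_1=\{X\subseteq G:\ A\subseteq X\ \text{for some}\ A\in\mathfrak{F}\}$ is exactly the one the paper uses, and your verification of the filter axioms and of the equality $\mathfrak{F}=\mathfrak{F}_1\cap Sub^{\ast}(G)$ is correct (indeed more detailed than the paper's, which merely says ``it can be easily verified'').
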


\begin{proof}
Let
$$
\mathfrak{F}_1=\{ X\subseteq G: A\subseteq X\ for\ some\ A\in \mathfrak{F}\}.
$$
It can be easily verified that $\mathfrak{F}_1$ is an ordinary filter and $\mathfrak{F}=\mathfrak{F}_1\cap Sub^{\ast}(G)$.
\end{proof}

\begin{definition}
An {\em ultra-filter} of subgroups is a subgroup filter
$\mathfrak{F}\subseteq  Sub^{\ast}(G)$ such that for any finite set
of subgroups $A_1, \ldots, A_n$, the condition $\cup_iA_i\in
\mathfrak{F}$ implies $A_i\in \mathfrak{F}$, for some $i$.
\end{definition}

\begin{proposition}
Let $\mathfrak{F}_1\subseteq P(G)$ be an ordinary ultra-filter. Then
$\mathfrak{F}=\mathfrak{F}_1\cap Sub^{\ast}(G)$ is an ultra-filter
of subgroups.
\end{proposition}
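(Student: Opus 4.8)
The plan is to verify the three filter axioms for $\mathfrak{F}=\mathfrak{F}_1\cap Sub^{\ast}(G)$ and then the ultra-filter condition. The first part is already supplied by the discussion preceding the statement: since $\mathfrak{F}_1$ is an ordinary filter on $G$, the intersection $\mathfrak{F}_1\cap Sub^{\ast}(G)$ is a subgroup filter (it contains $G$, is upward closed among non-trivial subgroups, and is closed under finite intersections because $\mathfrak{F}_1$ is). So the only real content is the ultra-filter axiom: given subgroups $A_1,\dots,A_n\leq G$ with $\bigcup_i A_i\in\mathfrak{F}$, I must produce an index $i$ with $A_i\in\mathfrak{F}$.

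First I would reduce to $n=2$ by an obvious induction: writing $\bigcup_{i=1}^n A_i=A_1\cup\bigl(\bigcup_{i=2}^n A_i\bigr)$, a union of two (not necessarily subgroup) sets, ordinary ultra-filter maximality gives that one of the two pieces lies in $\mathfrak{F}_1$; if it is $A_1$, then $A_1\in\mathfrak{F}_1\cap Sub^{\ast}(G)=\mathfrak{F}$ and we are done, and otherwise $\bigcup_{i=2}^n A_i\in\mathfrak{F}_1$, but this set need not be a subgroup, so I cannot directly re-apply the subgroup ultra-filter hypothesis. The clean way is to do the induction purely in $\mathfrak{F}_1$: for an ordinary ultra-filter, $X\cup Y\in\mathfrak{F}_1$ forces $X\in\mathfrak{F}_1$ or $Y\in\mathfrak{F}_1$, and iterating, $\bigcup_{i=1}^n A_i\in\mathfrak{F}_1$ forces $A_j\in\mathfrak{F}_1$ for some $j$. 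Since each $A_j$ is a non-trivial subgroup (elements of $Sub^{\ast}(G)$; note $\bigcup_i A_i\in\mathfrak{F}$ already forces the union, hence at least one $A_j$, to be non-trivial, and we may discard any trivial $A_j$ from the list), we get $A_j\in\mathfrak{F}_1\cap Sub^{\ast}(G)=\mathfrak{F}$.

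The one subtlety worth spelling out is the membership bookkeeping: we need the $A_i$ to actually be elements of $Sub^{\ast}(G)$ for the conclusion "$A_i\in\mathfrak{F}$" to make sense, and we need $\bigcup_i A_i\in\mathfrak{F}$ to unpack as "$\bigcup_i A_i$ is a subgroup, it is non-trivial, and it lies in $\mathfrak{F}_1$." The first of these (that the union is a subgroup) is part of the hypothesis that $\bigcup_i A_i\in\mathfrak{F}\subseteq Sub^{\ast}(G)$; the rest then feeds the argument above. I expect no genuine obstacle here — the main point is simply that the ordinary ultra-filter property is inherited because the decisive step (splitting a union) happens in $P(G)$, where $\mathfrak{F}_1$ lives, and only afterwards do we intersect back down to subgroups.
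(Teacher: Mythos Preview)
The paper states this proposition without proof, so there is no argument to compare against; your proposal correctly supplies the expected one. The key step---that an ordinary ultra-filter $\mathfrak{F}_1$ satisfies ``$\bigcup_{i=1}^n A_i\in\mathfrak{F}_1\Rightarrow A_j\in\mathfrak{F}_1$ for some $j$''---is exactly what is needed, and your bookkeeping about discarding trivial $A_i$ (using that every subgroup contains the identity, so trivial summands do not change the union) cleanly handles the passage from $\mathfrak{F}_1$ back to $\mathfrak{F}=\mathfrak{F}_1\cap Sub^{\ast}(G)$.
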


\begin{proposition}
Let $\mathfrak{F}$ be a subgroup filter on $G$. Then there exists an ultra-filter of subgroups containing $\mathfrak{F}$.
\end{proposition}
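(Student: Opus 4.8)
The plan is to reduce to the classical ultrafilter lemma on the power set $P(G)$, using the two propositions that immediately precede the statement. First I would apply the representation proposition: there is an ordinary filter $\mathfrak{F}_1 \subseteq P(G)$ with $\mathfrak{F} = \mathfrak{F}_1 \cap Sub^{\ast}(G)$, concretely $\mathfrak{F}_1 = \{X \subseteq G : A \subseteq X \text{ for some } A \in \mathfrak{F}\}$. Note that $\mathfrak{F}_1$ is a proper filter: every member of $\mathfrak{F}_1$ contains some non-identity subgroup lying in $\mathfrak{F}$, hence is non-empty.

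Next I would extend $\mathfrak{F}_1$ to an ordinary ultrafilter $\mathfrak{U} \subseteq P(G)$ by the classical ultrafilter lemma (Zorn's lemma applied to the poset of proper filters on $G$ containing $\mathfrak{F}_1$, ordered by inclusion: the union of a chain of proper filters is a proper filter, and a maximal proper filter is an ultrafilter). Then the proposition immediately above applies and gives that $\mathfrak{U} \cap Sub^{\ast}(G)$ is an ultra-filter of subgroups. Finally, from $\mathfrak{F}_1 \subseteq \mathfrak{U}$ we obtain
$$
\mathfrak{F} = \mathfrak{F}_1 \cap Sub^{\ast}(G) \subseteq \mathfrak{U} \cap Sub^{\ast}(G),
$$
so $\mathfrak{U} \cap Sub^{\ast}(G)$ is the desired ultra-filter of subgroups containing $\mathfrak{F}$.

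I do not expect a real obstacle: the content has been front-loaded into the two preceding propositions and the standard ultrafilter lemma, and the only points needing a moment's care are the properness of $\mathfrak{F}_1$ and the trivial remark that $G \in \mathfrak{U} \cap Sub^{\ast}(G)$ (since $G \in \mathfrak{F}$). As an alternative one could argue directly with Zorn's lemma on the poset of subgroup filters containing $\mathfrak{F}$ (chains have unions as upper bounds, so a maximal such filter $\mathfrak{M}$ exists) and then check that $\mathfrak{M}$ is an ultra-filter of subgroups: if $A_1 \cup \cdots \cup A_n \in \mathfrak{M}$ but no $A_i \in \mathfrak{M}$, maximality forces for each $i$ some $C_i \in \mathfrak{M}$ with $A_i \cap C_i = 1$, and then $C = C_1 \cap \cdots \cap C_n \in \mathfrak{M}$ satisfies $(A_1 \cup \cdots \cup A_n) \cap C = 1$, contradicting the finite intersection property. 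The reduction via ordinary ultrafilters is cleaner, so that is what I would write up.
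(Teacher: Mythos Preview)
Your proposal is correct and follows exactly the paper's own argument: represent $\mathfrak{F}$ as $\mathfrak{F}_1\cap Sub^{\ast}(G)$ for an ordinary filter $\mathfrak{F}_1$, extend $\mathfrak{F}_1$ to an ordinary ultrafilter $\overline{\mathfrak{F}}_1$, and take $\overline{\mathfrak{F}}_1\cap Sub^{\ast}(G)$. Your added remarks on properness and the direct Zorn alternative are sound but go beyond what the paper records.
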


\begin{proof}
We have $\mathfrak{F}=\mathfrak{F}_1\cap Sub^{\ast}(G)$ for some
ordinary filter $\mathfrak{F}_1$. But, there exists an ordinary
ultra-filter $\overline{\mathfrak{F}}_1$ such that
$\mathfrak{F}_1\subseteq \overline{\mathfrak{F}}_1$. Now,
$\overline{\mathfrak{F}}=\overline{\mathfrak{F}}_1\cap
Sub^{\ast}(G)$ is  a subgroup ultra-filter containing
$\mathfrak{F}$.
\end{proof}

\begin{definition}
Let $G$ and $H$ be two groups and $\mathfrak{F}\subseteq Sub^{\ast}(G)$ be a filter of subgroups. Let $f:G\to H$ be a group homomorphism and define
$$
f_{\ast}(\mathfrak{F})=\{ A\leq H: f^{-1}(A)\in \mathfrak{F}\}.
$$
One can see that this is a subgroup filter on $H$ and further, it is a subgroup ultra-filter, if $\mathfrak{F}$ is so.
\end{definition}

The next definition connects two notions of topo-groups and ultra-filters of subgroups.

\begin{definition}
Let $(G, T)$ be a topo-group and $\mathfrak{F}\subseteq
Sub^{\ast}(G)$ be a subgroup ultra-filter. Let $y\in G$. We say that
$\mathfrak{F}$ {\em converges} to $y$, if for all $A\in T$ with
$y\in A$, we have $A\in \mathfrak{F}$. In this situation we write
$\mathfrak{F}\to y$.
\end{definition}
Note that if $(G, T)$ and $(H, S)$ are topo-groups and if $f:G\to H$
is a topomorphism, then for an ultra-filter of subgroup
$\mathfrak{F}$, the condition $\mathfrak{F}\to x$ implies
$f_{\ast}(\mathfrak{F})\to f(x)$, for all $x\in G$. We are now ready
to prove the main theorem of this section:

\begin{theorem}
Let $(G, T)$ be a topo-group. Then $G$ is topo-compact if and only if any ultra-filter of subgroups converges to some point in $G$.
\end{theorem}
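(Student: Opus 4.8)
The plan is to mimic the classical proof of Tychonoff-via-ultrafilters, adapted to subgroup filters, which is exactly what the machinery developed in this section was designed for.

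\medskip

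\noindent\textbf{Proof plan.}
For the forward direction, suppose $G$ is topo-compact and let $\mathfrak{F}$ be an ultra-filter of subgroups on $G$. I argue by contradiction: assume $\mathfrak{F}$ converges to no point. Then for every $y\in G$ there is a topen subgroup $A_y\in T$ with $y\in A_y$ but $A_y\notin\mathfrak{F}$. The family $\{A_y:y\in G\}$ is a topen covering of $G$, so by topo-compactness there is a finite subcover $G=A_{y_1}\cup\cdots\cup A_{y_n}$. Since $G\in\mathfrak{F}$, we have $A_{y_1}\cup\cdots\cup A_{y_n}\in\mathfrak{F}$, and the defining property of an ultra-filter of subgroups forces $A_{y_i}\in\mathfrak{F}$ for some $i$, contradicting the choice of $A_{y_i}$.

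\medskip

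For the converse, suppose every ultra-filter of subgroups converges, and let $\{A_i:i\in I\}$ be a topen covering of $G$ with no finite subcovering. I want to build an ultra-filter that cannot converge anywhere. First, for each finite $J\subseteq I$ the complement $G\setminus\bigcup_{i\in J}A_i$ is nonempty, but that complement need not be a subgroup, so I must instead work with the collection $S$ of all subgroups $B\in Sub^{\ast}(G)$ such that $B\not\subseteq\bigcup_{i\in J}A_i$ for every finite $J$; equivalently $B$ "escapes" every finite subunion. One checks $G\in S$, and that $S$ is closed under the finite-intersection operation needed so that the generated subgroup filter $\mathfrak{F}_S$ (as constructed earlier in this section) is well-defined — here the point is that if $B_1,B_2$ each escape all finite subunions, one shows $B_1\cap B_2$ does too, using that the $A_i$ are subgroups closed under intersection in $T$; this is the step I expect to require the most care. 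Then extend $\mathfrak{F}_S$ to an ultra-filter of subgroups $\overline{\mathfrak{F}}$ by the existence proposition proved above. By hypothesis $\overline{\mathfrak{F}}\to y$ for some $y\in G$. But $y$ lies in some $A_{i_0}$ of the cover, and $A_{i_0}\in T$, so convergence gives $A_{i_0}\in\overline{\mathfrak{F}}$. On the other hand $A_{i_0}$ is a single topen subgroup covered by the finite subfamily $\{A_{i_0}\}$, so $A_{i_0}\notin S$; I then need to see that this keeps $A_{i_0}$ out of $\overline{\mathfrak{F}}$, which is the delicate point — it is not automatic merely from $A_{i_0}\notin S$, so instead I would phrase the filter condition directly in terms of the cover: arrange that $\overline{\mathfrak{F}}$ contains, for each finite $J$, some subgroup not contained in $\bigcup_{i\in J}A_i$, and derive a contradiction from $A_{i_0}\in\overline{\mathfrak{F}}$ together with fip, since $A_{i_0}\cap B$ would have to be a nontrivial subgroup escaping $\{A_{i_0}\}$, which is impossible as $A_{i_0}\cap B\subseteq A_{i_0}$.

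\medskip

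The main obstacle, as indicated, is the converse: the asymmetry between "points not covered" and "subgroups not covered" means the naive complement argument from the set-theoretic Tychonoff proof does not transcribe verbatim, and one must set up the right family $S$ of subgroups with fip and verify it is closed under finite intersections. Once that bookkeeping is done, the existence of an enveloping ultra-filter of subgroups and the convergence hypothesis deliver the contradiction quickly. I would also double-check the edge case where some $A_i=G$, which makes the cover finite trivially, so we may assume all $A_i\neq G$ throughout the converse.
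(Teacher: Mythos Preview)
Your forward direction is correct and identical to the paper's.

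Your converse direction has a genuine gap, and it is precisely where you suspected. The final contradiction does not go through: from $A_{i_0}\in\overline{\mathfrak{F}}$ and some $B\in\overline{\mathfrak{F}}$ with $B\not\subseteq A_{i_0}$ you only obtain $1\neq A_{i_0}\cap B\subseteq A_{i_0}$, which is no contradiction at all. You would need that \emph{every} member of $\overline{\mathfrak{F}}$ escapes $\{A_{i_0}\}$, not merely \emph{some} member, and nothing you have arranged gives that. The earlier step --- closure of your set $S$ under finite intersections --- is also not established, and it is not clear it holds in general; but even granting it, the endgame fails.

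The paper avoids this entirely by passing through \emph{ordinary} (set) ultrafilters, using the earlier propositions linking them to subgroup ultra-filters. Given a topen cover $G=\bigcup_{i\in I}A_i$ with no finite subcover, the family $\{\,G\setminus A_i:i\in I\,\}$ of \emph{sets} has the ordinary finite intersection property, so it sits inside an ordinary ultrafilter $\mathfrak{F}_1\subseteq P(G)$. Then $\mathfrak{F}=\mathfrak{F}_1\cap Sub^{\ast}(G)$ is a subgroup ultra-filter by the proposition proved just before. For any $y\in G$ there is some $i$ with $y\in A_i$; since $G\setminus A_i\in\mathfrak{F}_1$ and $\mathfrak{F}_1$ is an ultrafilter, $A_i\notin\mathfrak{F}_1$, hence $A_i\notin\mathfrak{F}$, so $\mathfrak{F}\not\to y$. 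The whole difficulty you encountered --- that complements of subgroups are not subgroups --- is absorbed by working in $P(G)$ first and only restricting to $Sub^{\ast}(G)$ at the end.
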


\begin{proof}
Let $G$ be topo-compact and $\mathfrak{F}$ be an ultra-filter of
subgroups on $G$. Suppose by contrary that $\mathfrak{F}$ does not
converge to any element of $G$. So, for all $y\in G$, there exists a
topen subgroup $A_y\leq G$ such that $y\in A_y$ and $A_y$ does not
belong to $\mathfrak{F}$. We have $G=\cup_yA_y$, so, $G$ can be
covered by a finite number of $A_y$'s, say $A_{y_1}, \ldots
A_{y_n}$. Hence
$$
A_{y_1}\cup\cdots \cup A_{y_n}=G\in \mathfrak{F},
$$
and therefore $A_{y_i}\in \mathfrak{F}$, for some $i$, since
$\mathfrak{F}$ is ultra-filter. This is a contradiction, so
$\mathfrak{F}$ has at least one convergence point in $G$.

Now, suppose that $G$ is not topo-compact. Hence, there is a
covering $G=\cup_{i\in I}A_i$, where each $A_i$ is topen and $G\neq
\cup_{i\in J}A_i$ for any finite subset $J\subseteq I$. This means
that the set
$$
S=\{ G\setminus A_i: i\in I\}
$$
has "set-fip" (i.e. the intersection of any finite number of its
elements is non-empty) and so there exists an ordinary ultra-filter
$\mathfrak{F}_1$ containing $S$. Let
$\mathfrak{F}=\mathfrak{F}_1\cap Sub^{\ast}(G)$. Then $\mathfrak{F}$
is a subgroup ultra-filter. For any $y\in G$, we have $y\in A_i$ for
some $i$. But as $\mathfrak{F}_1$ is an ultra-filter,  $A_i$ does
not belong to $\mathfrak{F}_1$ and already it does not belong to
$\mathfrak{F}$. This shows that $\mathfrak{F}$ does not converge to
$y$.
\end{proof}

We say that two elements $x$ and $y$ in a group $G$ are {\em
cyclically distinct}, if $\langle x\rangle\cap \langle y\rangle=1$.
So the above theorem says that a topo-system is topo-compact if and
only if any ultra-filter of subgroups have at least one point of
convergence up to the cyclic distinction. A similar assertion holds
for Hausdorff topo-groups.

\begin{theorem}
A topo-group $(G, T)$ is Hausdorff, if and only if any ultra-filter of subgroups in $G$ converges to at most one point (up to cyclic distinction).
\end{theorem}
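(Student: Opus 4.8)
The plan is to prove both directions by playing the abstract subgroup-filter theory off against the convergence condition, using the same strategy as in the preceding theorem on topo-compactness. First I would establish the forward direction: suppose $(G,T)$ is Hausdorff, let $\mathfrak{F}$ be a subgroup ultra-filter, and suppose $\mathfrak{F}\to x$ and $\mathfrak{F}\to y$ with $\langle x\rangle\cap\langle y\rangle=1$ (i.e. $x$ and $y$ are cyclically distinct). By the Hausdorff hypothesis there exist $A,B\in T$ with $x\in A$, $y\in B$ and $A\cap B=1$. Since $\mathfrak{F}\to x$ we get $A\in\mathfrak{F}$, and since $\mathfrak{F}\to y$ we get $B\in\mathfrak{F}$; then by property (c) of a subgroup filter, $A\cap B=1\in\mathfrak{F}$. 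But $\mathfrak{F}\subseteq Sub^{\ast}(G)$ by definition, so $1\notin\mathfrak{F}$ — a contradiction. Hence any two convergence points of $\mathfrak{F}$ are cyclically equivalent, which is the ``at most one point up to cyclic distinction'' statement.

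For the converse I would argue contrapositively: assume $(G,T)$ is not Hausdorff, so there exist $x,y\in G$ with $\langle x\rangle\cap\langle y\rangle=1$ such that every pair $A,B\in T$ with $x\in A$ and $y\in B$ satisfies $A\cap B\neq 1$. The goal is to build a subgroup ultra-filter converging to both $x$ and $y$. Consider the collection
$$
S=\{A\cap B:\ A,B\in T,\ x\in A,\ y\in B\}.
$$
By the failure of Hausdorffness every member of $S$ is a non-identity subgroup. I claim $S$ has the finite intersection property in the sense used in this paper: given $A_1\cap B_1,\dots,A_n\cap B_n\in S$, set $A=A_1\cap\cdots\cap A_n$ and $B=B_1\cap\cdots\cap B_n$; these lie in $T$ by property (c) of a topo-system, contain $x$ and $y$ respectively, so $A\cap B\neq 1$, and $A\cap B$ is contained in the intersection of the $A_i\cap B_i$, which is therefore non-identity as a subgroup. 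Thus $S$ has fip, so by the machinery developed earlier (pass to the ordinary filter generated by $S$ on $P(G)$, extend to an ordinary ultra-filter $\mathfrak{F}_1$, and set $\mathfrak{F}=\mathfrak{F}_1\cap Sub^{\ast}(G)$) we obtain a subgroup ultra-filter $\mathfrak{F}$ with $S\subseteq\mathfrak{F}$. Since for every topen $A$ containing $x$ we have $A\cap G\in S$ with the convention $G\in T$ — more carefully, $A = A\cap G \in S$ because $x\in A$, $y\in G$ — so $A\in\mathfrak{F}$, giving $\mathfrak{F}\to x$; symmetrically $\mathfrak{F}\to y$. Since $x$ and $y$ are cyclically distinct, $\mathfrak{F}$ has two genuinely distinct convergence points, contradicting the hypothesis.

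The main obstacle I anticipate is bookkeeping around the phrase ``up to cyclic distinction'' and making sure the fip argument for $S$ is clean: one must verify that a subgroup contained in a non-identity subgroup need not itself be non-identity, so the correct claim is that $A\cap B$ (which is a \emph{subgroup} and is non-identity) is contained in each $A_i\cap B_i$, forcing those to be non-identity as well — this is fine, but it is worth stating explicitly rather than invoking a vague ``fip''. A second minor point is confirming that the construction $\mathfrak{F}=\mathfrak{F}_1\cap Sub^{\ast}(G)$ actually contains every topen neighbourhood of $x$ and of $y$; this follows because each such neighbourhood already appears in $S\subseteq\mathfrak{F}$, using $G\in T$ to write $A=A\cap G$. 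Everything else is a routine application of Proposition on filter generation and the extension-to-ultrafilter results proved earlier in this section.
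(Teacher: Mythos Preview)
Your proof is correct and follows the same two-part strategy as the paper. The forward direction is identical. For the converse, both arguments exhibit a family of non-identity subgroups with fip arising from the failure of Hausdorffness, extend it to a subgroup ultra-filter, and verify convergence to both $x$ and $y$. The only difference is the choice of generating family: the paper takes $S=\mathfrak{F}_x\cup\mathfrak{F}_y$, the union of the two principal subgroup filters, whereas you take $S=\{A\cap B:\ A,B\in T,\ x\in A,\ y\in B\}$.

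Your choice is in fact the cleaner one. The paper's $S$ as literally written does \emph{not} have fip, since $\langle x\rangle\in\mathfrak{F}_x$, $\langle y\rangle\in\mathfrak{F}_y$, and $\langle x\rangle\cap\langle y\rangle=1$ by the very hypothesis that $x$ and $y$ are cyclically distinct; the intended set is presumably the collection of \emph{topen} neighbourhoods of $x$ together with those of $y$, for which the non-Hausdorff assumption does give fip. Once that fix is made, the two generating families produce the same subgroup filter (each topen $A\ni x$ equals $A\cap G$ and so lies in your $S$, and conversely each $A\cap B$ in your $S$ lies in the filter generated by the topen neighbourhoods), so your argument is essentially the paper's with the bookkeeping done more carefully.
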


\begin{proof}
Let $\mathfrak{F}$ be an ultra filter of subgroups in $G$ and $(G,
T)$ be Hausdorff. Let $x$ and $y$ be cyclically distinct but
$\mathfrak{F}\to x$ and in the same time $\mathfrak{F}\to y$. We
know that there are $A, B\in T$ containing $x$ and $y$, respectively
and $A\cap B=1$. But, then we have also $A, B\in \mathfrak{F}$ and
so $1=A\cap B\in \mathfrak{F}$, which is impossible.

Conversely, let $(G, T)$ be not Hausdorff. Then There are cyclically
distinct $x$ and $y$, such that for all topen subgroups $A$ and $B$
with $x\in A$ and $y\in B$, we have $A\cap B\neq 1$. Suppose
$S=\mathfrak{F}_x\cup \mathfrak{F}_y$. Then clearly $S$ has "fip"
and hence there exists an ultra-filter of subgroups, say
$\mathfrak{F}$, such that $S\subseteq \mathfrak{F}$. Now suppose
$A\in T$ and $x\in A$. Then $A\in S\subseteq \mathfrak{F}$.
therefore $\mathfrak{F}\to x$. Similarly we have $\mathfrak{F}\to
y$.
\end{proof}

\section{A Tychonoff Type Theorem}
It seems that many known theorems of topology  have versions in
topo-groups. Once we prove such a theorem, it is possible to
translate it in the language of groups and find an interesting
theorem of group theory. In this section, we prove an analogue of
the compactness theorem of Tychonoff for topo-groups.

\begin{theorem}
Let $\{ (G_i, T_i)\}_{i\in I}$ be a family of topo-compact topo-groups. Then $G=\prod_{i\in I}G_i$ is also topo-compact.
\end{theorem}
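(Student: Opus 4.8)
The plan is to mimic the classical ultrafilter proof of Tychonoff's theorem, using the characterization of topo-compactness proved above (Theorem: $G$ is topo-compact iff every subgroup ultra-filter converges). So let $\mathfrak{F}\subseteq Sub^{\ast}(G)$ be an arbitrary subgroup ultra-filter on $G=\prod_{i\in I}G_i$; I must produce a point $y=(y_i)_{i\in I}\in G$ with $\mathfrak{F}\to y$. First I would push $\mathfrak{F}$ forward along each projection: since $\pi_i:G\to G_i$ is a homomorphism, $(\pi_i)_{\ast}(\mathfrak{F})$ is a subgroup ultra-filter on $G_i$ (by the Definition/remark that $f_\ast$ preserves ultra-filters). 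By topo-compactness of $(G_i,T_i)$ and the convergence theorem, $(\pi_i)_{\ast}(\mathfrak{F})$ converges to some $y_i\in G_i$; set $y=(y_i)_{i\in I}$.

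Next I would verify $\mathfrak{F}\to y$ directly from the definition of the product topo-system. A basic topen subgroup of $G$ containing $y$ has the form $A=\prod_{i\in I}A_i$ with $A_i\in T_i$, $A_i=G_i$ for all $i$ outside a finite set $J$, and $y_i\in A_i$ for all $i$. For $i\in J$, since $y_i\in A_i\in T_i$ and $(\pi_i)_\ast(\mathfrak{F})\to y_i$, we get $A_i\in (\pi_i)_\ast(\mathfrak{F})$, i.e. $\pi_i^{-1}(A_i)\in\mathfrak{F}$. Now I would observe the key identity
$$
A=\prod_{i\in I}A_i=\bigcap_{i\in J}\pi_i^{-1}(A_i),
$$
which holds because $A_i=G_i$ off $J$; this is a \emph{finite} intersection of members of $\mathfrak{F}$, hence $A\in\mathfrak{F}$ by filter property (c). Thus every topen subgroup containing $y$ lies in $\mathfrak{F}$, so $\mathfrak{F}\to y$, and by the convergence theorem $G$ is topo-compact.

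One point that needs care is that the convergence criterion only refers to members of the topo-system $T$, and an arbitrary element of $T$ on the product need not literally be a box $\prod A_i$ — it is generated from boxes by the two operations in the definition of a topo-system. Here is where the two "trivial equalities" highlighted in the excerpt do the work: equality 1 says finite intersections of boxes are boxes, and equality 2 says generated subgroups of boxes are boxes, so in fact the collection $\{\prod_{i\in I}A_i : A_i\in T_i,\ \text{almost all }A_i=G_i\}$ is already closed under both operations and equals $T$. Hence every topen subgroup of $G$ genuinely has the box form used above, and no further reduction is required.

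The main obstacle I anticipate is not conceptual but bookkeeping: making sure $f_\ast$ of an ultra-filter of \emph{non-identity} subgroups is again an ultra-filter of \emph{non-identity} subgroups (one must check $1\notin (\pi_i)_\ast(\mathfrak{F})$, equivalently $\pi_i^{-1}(1)\notin\mathfrak{F}$ — which holds since $\pi_i^{-1}(1)=1\times\prod_{j\neq i}G_j$ has identity... indeed one uses that $\mathfrak{F}$ is a filter of non-identity subgroups and that $\pi_i^{-1}(1)\neq 1$ when $|I|\geq 2$, while the case $I$ a singleton is trivial); and likewise checking the ultra-filter "union" condition transfers correctly. These are exactly the verifications left implicit after the definition of $f_\ast$, so citing that definition should suffice, but it is the only place where a reader might want details.
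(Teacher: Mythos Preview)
Your proof is correct and follows essentially the same route as the paper's: push the ultra-filter forward along each projection, use topo-compactness of the factors to get a limit in each coordinate, and then recover each product topen as a finite intersection of the $\pi_i^{-1}(A_i)$. Your added remark that the two ``trivial equalities'' force every element of the product topo-system to be a genuine box is a useful clarification the paper leaves implicit; your parenthetical worry about whether $(\pi_i)_\ast(\mathfrak{F})$ might contain the identity subgroup is slightly garbled (the fact that $\pi_i^{-1}(1)\neq 1$ works \emph{against} you, not for you), but the paper simply asserts that $f_\ast$ preserves subgroup ultra-filters, and in any case if $\pi_i^{-1}(1)\in\mathfrak{F}$ then every $A_i\in T_i$ satisfies $\pi_i^{-1}(A_i)\supseteq\pi_i^{-1}(1)\in\mathfrak{F}$, so the coordinate argument goes through for any choice of $y_i$.
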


\begin{proof}
Let $\mathfrak{F}\subseteq Sub^{\ast}(G)$ be a subgroup
ultra-filter. We prove that it converges to at least one point in
$G$. Note that $(\pi_i)_{\ast}(\mathfrak{F})$ is an ultra-filter of
subgroups in $G_i$ and since $G_i$ is topo-compact, so it converges
to some point $x_i\in G_i$. Suppose $x=(x_i)\in G$. We prove that
$\mathfrak{F}\to x$. Let $A\leq G$ be topen and $x\in A$. We have
$A=\prod_{i\in I}A_i$, with $A_i\in T_i$ and such that almost all
$A_i=G_i$. Since $x_i\in A_i$, so $A_i\in
(\pi_i)_{\ast}(\mathfrak{F})$, i.e. $\pi_i^{-1}(A_i)\in
\mathfrak{F}$. Now, clearly
$$
A=\bigcap_{i\in I}\pi_i^{-1}(A_i),
$$
and the intersection consists of finitely many non-trivial element
of $\mathfrak{F}$. So, it belongs to $\mathfrak{F}$ and this shows
that $\mathfrak{F}\to x$, therefore $G$ is topo-compact.
\end{proof}

{\bf Acknowledgement} The author would like to thank M. H. Jafari
for useful discussion.


\begin{thebibliography}{99}

\bibitem{Bur} S. Burris, H. P. Sankappanavar,
{\it A course in universal algebra}, The Millenium Edition, 2012 update.

\bibitem{Olshan} A. Y. Olshanskii,
{\it Geometry of defining relations in groups}, Kluwer Academic Publishers, 1991.


\bibitem{Pass} D. S. Passman,
{\it The algebraic structure of group rings}, Wiley, 1977.

\bibitem{Scott} W. R. Scott,
{\it Group theory}, Dover, 1987.


\end{thebibliography}
\end{document}